\newtheorem{thm}{Theorem}[section]
\newtheorem{prop}[thm]{Proposition}
\newtheorem{cor}[thm]{Corollary}
\newtheorem{lem}[thm]{Lemma}
\theoremstyle{definition}
\newtheorem{defn}[thm]{Definition}
\newtheorem{exas}[thm]{Examples}
\newtheorem{exa}[thm]{Example}
\newtheorem{que}[thm]{Question}
\theoremstyle{remark}
\newtheorem{rem}[thm]{Remark}
\newcommand{\on}{\operatorname}
\newcommand{\F}{\mathbb{F}}\newcommand{\R}{\mathbb{R}}
\newcommand{\N}{\mathbb{N}}\newcommand{\Z}{\mathbb{Z}}
\newcommand{\cl}{\on{cl}}
\newcommand{\PG}{\on{PG}}\newcommand{\AG}{\on{AG}}
\newcommand{\bG}{\mathbf G}\newcommand{\bU}{\mathbf U}
\newcommand{\bP}{\mathbf P}\newcommand{\bA}{\mathbf A}
\newcommand{\cP}{\mathcal P}\newcommand{\cL}{\mathcal L}
\newcommand{\cU}{\mathcal U}
\newcommand{\cX}{\mathcal X}
\newcommand{\cB}{\mathcal B}\newcommand{\cC}{\mathcal C}
\newcommand{\cD}{\mathcal D}\newcommand{\cG}{\mathcal G}
\newcommand{\cH}{\mathcal H}\newcommand{\cI}{\mathcal I}
\begin{document}

\title{Designs and codes in affine geometry}

\author{Jens Zumbrägel}

\address{School of Computer and Communication Sciences, EPFL, Station 14,
  1015 Lausanne, Switzerland; email: jens.zumbragel@epfl.ch}

\date{\today}

\begin{abstract}
  Classical designs and their (projective) $q$-analogs can both be
  viewed as designs in matroids, using the matroid of all subsets of a
  set and the matroid of linearly independent subsets of a vector
  space, respectively.  Another natural matroid is given by the point
  sets in general position of an affine space, leading to the concept
  of an affine design.  Accordingly, a $t$-$(n, k, \lambda)$ affine
  design of order~$q$ is a collection~$\cB$ of $(k-1)$-dimensional
  spaces in the affine geometry $\textbf{A} = \AG(n-1, q)$ such that
  each $(t-1)$-dimensional space in~$\textbf{A}$ is contained in
  exactly~$\lambda$ spaces of~$\cB$.  In the case $\lambda = 1$, as
  usual, one also refers to an affine Steiner system $S(t, k, n)$.

  In this work we examine the relationship between the affine and the
  projective $q$-analogs of designs.  The existence of affine Steiner
  systems with various parameters is shown, including the affine
  $q$-analog $S(2, 3, 7)$ of the Fano plane.  Moreover, we consider
  various distances in matroids and geometries, and we discuss the
  application of codes in affine geometry for error-control in a
  random network coding scenario.
\end{abstract}

\maketitle


\section{Introduction}

In recent years the $q$-analogs of combinatorial designs and Steiner
systems enjoyed a considerable interest due to their relation with
error-control in random network coding~\cite{KK08}.  In this context a
remarkable result has been the discovery of a nontrivial $2$-ary $S(2,
3, 13)$ Steiner system~\cite{BEOVW13}, whereas, perhaps surprisingly,
no other $q$-analog of a Steiner system has been found since.

The present work aims to offer a new perspective on $q$-analog
structures.  While the $q$-analog designs studied so far can be seen
as living in projective geometry, we propose to consider the setup of
affine geometry.  A suitable framework for both the ``projective'' and
the ``affine'' analogs of designs is provided by the theory of
matroids -- or more precisely, of so-called perfect matroid designs in
which all flats of same rank have equal cardinality.  As it turns out
there exist families of affine $q$-analog Steiner systems for several
parameters.  Besides being quite natural from a pure mathematical
point of view, this approach is also viable for random network coding
applications, when propagating in a network random affine combinations
instead of linear ones, cf.~\cite{GG11}.

Here is an outline of the paper.  In Section~\ref{sec:matroids} and
Section~\ref{sec:geometry} we provide an account of matroids and of
finite geometry, as suitable for introducing the concept at hand.  The
discussion of projective and affine designs as well as their
relationship then follows in Section~\ref{sec:designs}.  Finally,
metric aspects and possible applications to the random network coding
scenario are outlined in Section~\ref{sec:coding}.


\section{Matroids}\label{sec:matroids}

The notion of a matroid was introduced by Whitney~\cite{Wh35} as an
abstraction of the concept of linear independence.  It serves as a
suitable framework for studying $q$-analogs of designs, as well as for
developing metric and coding aspects in a general setting~\cite{GG11}.
In this section we provide a brief treatment of some relevant aspects
of matroid theory.  For background reading and further details on
matroids we refer to Oxley's monograph~\cite{Ox11}, while a concise
introduction to designs in matroids can be found in~\cite{CD06}.

\begin{defn}
  A \emph{matroid} is a pair $(S, \cI)$, where~$S$ is a finite set
  and~$\cI$ is a nonempty family of subsets of~$S$ satisfying
  \begin{enumerate}[(i)]
  \item if $I \in \cI$ and $J \subseteq I$, then $J \in \cI$;
  \item if $I, J \in \cI$ and $|I| < |J|$, there is $x \in J
    \setminus I$ with $I \cup \{ x \} \in \cI$ (\emph{exchange
      axiom}).
  \end{enumerate}
  A subset~$I$ of~$S$ is called \emph{independent} if $I \in \cI$,
  otherwise \emph{dependent}.
\end{defn}

\begin{exas}
  The following are examples for matroids.
  \begin{enumerate}
  \item The \emph{free matroid} is the matroid $(S, \cP(S))$, where~$S$
    is a finite set and $\cP(S)$ denotes the power set of~$S$.
  \item For a finite vector space~$V$, the \emph{vector matroid} is the
    matroid~$(V, \cI)$, where $\cI$ is the family of all
    linearly independent subsets of~$V$.
  \item Let $G = (V, E)$ be an undirected graph with finite vertex
    set~$V$ and edge set $E \subseteq {V \choose 2}$.  Then $(E, \cI)$
    is a \emph{graphic matroid}, where a subset of the edges is
    independent if and only if it contains no cycle.
  \end{enumerate}
\end{exas}

Given a matroid $M = (S, \cI)$ and any subset~$X$ of~$S$ the {\em
  restriction} $M|X \coloneqq (X, \cI \cap \cP(X))$ is again a
matroid.  Finite restrictions of vector spaces or, equivalently, the
column sets of matrices were among the original motivating examples of
matroids, besides the graphic matroids.

In a matroid $M = (S, \cI)$, a maximal independent set is called a
\emph{basis}, and a minimal dependent set is a \emph{circuit}.  It
follows from the exchange axiom that all bases have the same
cardinality, and this number is called the \emph{rank} of the
matroid~$M$.  Moreover, the \emph{rank} $\rho(X)$ of a subset~$X$
of~$S$ is the rank of the restricted matroid $M|X$, i.e., the
cardinality of a maximal independent subset of~$X$.  

\begin{prop}[{cf.~\cite[Cor.~1.3.4]{Ox11}}]
  The rank function $\rho \colon \cP(S) \to \Z$ of a matroid $(S, \cI)$
  satisfies the following properties for all subsets $X, Y$ of~$S$:
  \begin{enumerate}[(i)]
  \item $0 \le \rho(X) \le | X |$,
  \item $X \subseteq Y$ implies $\rho(X) \le \rho(Y)$,
  \item $\rho(X \cup Y) + \rho(X \cap Y) \le \rho(X) + \rho(Y)$
    (submodular inequality).
  \end{enumerate}
  Conversely, any mapping $\rho \colon \cP(S) \to \Z$ with these properties
  is the rank function of a matroid $(S, \cI)$, where
  $\cI \coloneqq \{ I \subseteq S \mid \rho (I) = | I | \}$.
\end{prop}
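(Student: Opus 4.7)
For the forward direction, properties (i) and (ii) are immediate from the definition of $\rho(X)$ as the maximal size of an independent subset of $X$: an independent set is bounded in size by $|X|$, and any independent subset of $X$ remains independent in $Y \supseteq X$. The substantive content is the submodular inequality (iii). The plan here is to take a maximal independent subset $I$ of $X \cap Y$ and use the exchange axiom to extend $I$ to a maximal independent subset $J$ of $X \cup Y$. Setting $J_X \coloneqq J \cap X$ and $J_Y \coloneqq J \cap Y$, one checks $J_X \cup J_Y = J$ and $J_X \cap J_Y = I$; the nontrivial inclusion in the latter uses that $J \cap (X \cap Y)$ is an independent subset of $X \cap Y$ containing the maximal $I$. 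Since $J_X, J_Y$ are independent in $X, Y$ respectively, counting via $|J_X| + |J_Y| = |J_X \cup J_Y| + |J_X \cap J_Y|$ yields $\rho(X) + \rho(Y) \ge |J_X| + |J_Y| = |J| + |I| = \rho(X \cup Y) + \rho(X \cap Y)$.

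For the converse, set $\cI \coloneqq \{I \subseteq S \mid \rho(I) = |I|\}$. Non-emptiness follows from $\rho(\emptyset) = 0$, which is forced by (i). For the downward closure axiom, given $J \subseteq I \in \cI$, applying (iii) to $J$ and $I \setminus J$ together with (i) gives $|I| = \rho(I) \le \rho(J) + \rho(I \setminus J) \le |J| + |I \setminus J| = |I|$, so equality holds throughout and $\rho(J) = |J|$.

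The main obstacle is the exchange axiom. I would argue by contradiction: assume $I, J \in \cI$ with $|I| < |J|$ but $I \cup \{x\} \notin \cI$ for every $x \in J \setminus I$, which by (ii) forces $\rho(I \cup \{x\}) = |I|$. Enumerating $J \setminus I = \{x_1, \dots, x_k\}$ and setting $I_i \coloneqq I \cup \{x_1, \dots, x_i\}$, I would show inductively that $\rho(I_i) = |I|$, the step following from submodularity applied to $I_i$ and $I \cup \{x_{i+1}\}$, whose intersection is $I$ and whose union is $I_{i+1}$. This gives $\rho(I \cup J) = |I|$, contradicting $\rho(I \cup J) \ge \rho(J) = |J| > |I|$ by monotonicity.

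Finally, to verify that the rank function of the matroid $(S, \cI)$ agrees with $\rho$, fix $X \subseteq S$ and pick $I \subseteq X$ maximal with $\rho(I) = |I|$. For each $x \in X \setminus I$, maximality forces $\rho(I \cup \{x\}) = |I|$, and the same incremental submodularity argument as above propagates $\rho(X) = |I|$, which by construction is the matroid rank of $X$.
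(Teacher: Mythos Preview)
Your proof is correct and follows the standard textbook argument. Note, however, that the paper does not actually prove this proposition: it is stated with a reference to Oxley~\cite[Cor.~1.3.4]{Ox11} and no proof is given. Your argument is essentially the one found there, so there is nothing to compare beyond observing that you have filled in what the paper leaves to the citation.
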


The mapping $\cl \colon \cP(S) \to \cP(S)$ defined by
\[ \cl(X) \coloneqq \{ x \in S \mid \rho(X \cup \{ x \}) = \rho(X) \} \]
for $X \in \cP(S)$ turns out to be a \emph{closure operator}, i.e., it
holds $X \subseteq \cl(X) = \cl(\cl(X))$ and $\cl(X) \subseteq \cl(Y)$
if $X \subseteq Y$, for all $X, Y \in \cP(S)$.  A subset~$E$ of~$S$
satisfying $E = \cl(E)$ is called a \emph{flat}, or a $k$-\emph{flat}
if its rank~$\rho(E)$ is~$k$.

\begin{prop}[{cf.~\cite[Cor.~1.4.6]{Ox11}}]\label{prop:closure}
  The closure operator $\cl \colon \cP(S) \to \cP(S)$ of a matroid
  $(S, \cI)$ obeys the following for any flat~$E$
  and any $x, y \in S \setminus E$:
  \[ y \in \cl(E \cup \{ x \}) \ \text{ implies } \ x \in
  \cl(E \cup \{ y \}) \quad (\text{exchange property}) . \] 
  Conversely, any closure operator $\cl \colon \cP(S) \to \cP(S)$ with
  the exchange property is the closure operator of a matroid $(S,
  \cI)$, where $\cI \coloneqq \{ I \subseteq S \mid \forall x \in I
  :\, x \notin \cl(X \setminus \{ x \} ) \}$.
\end{prop}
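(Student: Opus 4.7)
For the forward direction, I would work in rank language, using the characterization $y \in \cl(X) \Leftrightarrow \rho(X \cup \{y\}) = \rho(X)$. Since $E = \cl(E)$ is a flat and $x, y \in S \setminus E$, the rank axioms yield $\rho(E \cup \{x\}) = \rho(E) + 1 = \rho(E \cup \{y\})$. The hypothesis $y \in \cl(E \cup \{x\})$ is precisely $\rho(E \cup \{x, y\}) = \rho(E \cup \{x\})$, and chaining these equalities gives $\rho(E \cup \{x, y\}) = \rho(E \cup \{y\})$, i.e.\ $x \in \cl(E \cup \{y\})$.

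For the converse, my first step is to upgrade the exchange hypothesis from flats to arbitrary subsets: if $y \in \cl(X \cup \{x\}) \setminus \cl(X)$, then $x \in \cl(X \cup \{y\})$. This is a short calculation applying the stated property to $E \coloneqq \cl(X)$ and using that $\cl(E \cup \{z\}) = \cl(X \cup \{z\})$ for $z \in \{x, y\}$, by idempotence and monotonicity. The hereditary axiom~(i) for $\cI$ then follows directly from monotonicity of $\cl$. The key auxiliary claim is: for $I \in \cI$ and $x \in S \setminus I$, one has $I \cup \{x\} \in \cI$ if and only if $x \notin \cl(I)$. The nontrivial direction is proved by contradiction: if some $y \in I$ satisfied $y \in \cl((I \setminus \{y\}) \cup \{x\})$, then, since $y \notin \cl(I \setminus \{y\})$ by independence of $I$, the enhanced exchange would yield $x \in \cl(I)$.

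With the claim in hand the exchange axiom~(ii) reduces to the size bound that $I, J \in \cI$ with $J \subseteq \cl(I)$ implies $|J| \le |I|$. Indeed, were (ii) to fail, the claim would force $J \setminus I \subseteq \cl(I)$, hence $J \subseteq \cl(I)$, contradicting the bound. The \emph{main obstacle} is establishing this size bound, which I would prove by induction on $|J \setminus I|$ via a Steinitz-style augmentation: pick $y \in J \setminus I$ and use the fact that $I \cup \{y\} \notin \cI$ to extract some $w \in I$ with $w \in \cl((I \setminus \{w\}) \cup \{y\})$, so that $I' \coloneqq (I \setminus \{w\}) \cup \{y\}$ is again in~$\cI$ and still satisfies $J \subseteq \cl(I')$. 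Ensuring that $w$ can always be chosen outside $J$, so that $|J \setminus I'| = |J \setminus I| - 1$ and the induction closes, is the delicate point. Finally, once $(S, \cI)$ is known to be a matroid, verifying that its closure operator equals the given~$\cl$ is a routine consequence of the claim and the size bound, through the equivalence $\rho(X \cup \{y\}) = \rho(X) \Leftrightarrow y \in \cl(X)$.
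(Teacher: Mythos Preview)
The paper does not actually prove this proposition: it is stated with a citation to Oxley~\cite[Cor.~1.4.6]{Ox11} and no argument is given. There is thus no ``paper's proof'' against which to compare your attempt.

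That said, your outline is a sound and standard route to the result. The forward direction via rank is clean and correct. For the converse, your sequence---upgrading exchange to arbitrary subsets, establishing the augmentation claim $I\cup\{x\}\in\cI \Leftrightarrow x\notin\cl(I)$, and reducing axiom~(ii) to a Steinitz-type size bound---is exactly the classical strategy. You are right to flag the choice of the swap element~$w$ as the delicate point; one clean way to close it is to take a \emph{minimal} $K\subseteq I$ with $y\in\cl(K)$, observe that $K\not\subseteq J$ (else $y\in\cl(K)\subseteq\cl(J\setminus\{y\})$, contradicting $J\in\cI$), pick $w\in K\setminus J$, and then argue separately that $y\notin\cl(I\setminus\{w\})$ (otherwise iterate the minimality argument inside $I\setminus\{w\}$, eventually forcing $y\in\cl(K')$ for some $K'\subseteq I\cap J$, again a contradiction). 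With that wrinkle handled, your proof goes through; it simply supplies what the paper leaves to the reference.
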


A subset~$X$ of~$S$ is called \emph{generating} if $\cl(X) = S$.  It
is easy to verify that the family of bases in a matroid coincides with
the independent generating sets and also with the minimal generating
sets.


\subsection*{The lattice of flats}

For a matroid $M = (S, \cI)$ let $L(M) \coloneqq \{ E \subseteq S \mid
\cl(E) = E \}$ denote its collection of flats.  When ordered by
inclusion this is a \emph{lattice}, i.e., for any flats $E, F$ there
exists a greatest lower bound or \emph{meet} $E \wedge F = E \cap F$
and a least upper bound or \emph{join} $E \vee F = \cl(E \cup F)$; see
Figure~\ref{fig:flats} for an example.

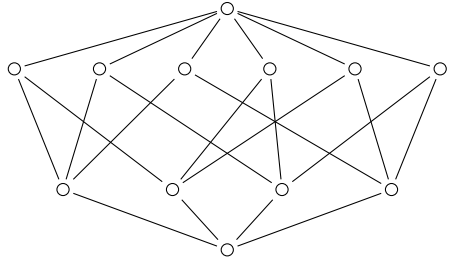
\begin{figure}
  \begin{tikzpicture}[scale=0.8]
    \node (o) at (0,-1) {}; \node (i) at (0,3) {};
    \node (a) at (-2.7,0) {}; \node (b) at (-0.9,0) {};
    \node (c) at  (0.9,0) {}; \node (d) at  (2.7,0) {};
    \node (ab) at (-3.5,2) {}; \node (ac) at (-2.1,2) {};
    \node (ad) at (-0.7,2) {}; \node (bc) at  (0.7,2) {};
    \node (bd) at  (2.1,2) {}; \node (cd) at  (3.5,2) {};
    \draw (o) circle (0.1); \draw (i) circle (0.1);
    \draw (a) circle (0.1); \draw (b) circle (0.1);
    \draw (c) circle (0.1); \draw (d) circle (0.1);
    \draw (ab) circle (0.1); \draw (ac) circle (0.1);
    \draw (ad) circle (0.1); \draw (bc) circle (0.1);
    \draw (bd) circle (0.1); \draw (cd) circle (0.1);
    \draw (o) edge (a); \draw (o) edge (b);
    \draw (o) edge (c); \draw (o) edge (d);
    \draw (a) edge (ab); \draw (b) edge (ab); \draw (a) edge (ac);
    \draw (c) edge (ac); \draw (a) edge (ad); \draw (d) edge (ad);
    \draw (b) edge (bc); \draw (c) edge (bc); \draw (b) edge (bd);
    \draw (d) edge (bd); \draw (c) edge (cd); \draw (d) edge (cd);
    \draw (ab) edge (i); \draw (ac) edge (i); \draw (ad) edge (i);
    \draw (bc) edge (i); \draw (bd) edge (i); \draw (cd) edge (i);
  \end{tikzpicture}
  \caption{The lattice of the flats in $\AG(2, 2)$.}\label{fig:flats}
\end{figure}

Notice that the rank function $\rho \colon L(M) \to \Z$ on the set of
flats is strictly isotone, i.e., $\rho(E) < \rho(F)$ if $E \subsetneq
F$, as well as submodular, i.e., $\rho(E \vee F) + \rho(E \wedge F)
\le \rho(E) + \rho(F)$ for flats $E, F$.  In this context the
following result is of interest, which has been shown in a more
general setup (see, e.g.,~\cite{HG72,Mo77}).  We include a direct
proof for convenience (see also \cite[Prop.~7, Cor.~2]{GG11}).

\begin{prop}\label{prop:metric}
  In any lattice~$L$ with a strictly isotone and submodular function
  $r \colon L \to \R$ a metric is given by $d(x, y) \coloneqq 
  2 r(x \vee y) - r(x) - r(y)$, for $x, y \in L$.
\end{prop}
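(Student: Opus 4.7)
The plan is to verify the four metric axioms in turn, isolating the triangle inequality as the only nontrivial step. Symmetry $d(x,y)=d(y,x)$ is immediate from the formula since $x \vee y = y \vee x$. For non-negativity, note that $x \vee y \ge x$ and $x \vee y \ge y$, so isotony gives $r(x \vee y) \ge r(x)$ and $r(x \vee y) \ge r(y)$, and adding these yields $2r(x \vee y) \ge r(x) + r(y)$. For the identity of indiscernibles, clearly $d(x,x) = 0$; conversely if $d(x,y) = 0$ then both inequalities above are equalities, i.e.\ $r(x \vee y) = r(x) = r(y)$. Since $x \le x \vee y$ and $r$ is \emph{strictly} isotone, the equality $r(x \vee y) = r(x)$ forces $x = x \vee y$; symmetrically $y = x \vee y$, whence $x = y$. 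This is precisely the place where strict isotony (rather than mere isotony) is needed.

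For the triangle inequality $d(x,z) \le d(x,y) + d(y,z)$, expanding the definitions and cancelling the $-r(x)$ and $-r(z)$ terms from both sides reduces the claim to the single inequality
\[
  r(x \vee z) + r(y) \ \le\ r(x \vee y) + r(y \vee z).
\]
So the whole question is to establish this chain in an arbitrary lattice equipped with a submodular, isotone rank function.

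The main step is to apply the submodular inequality to the two elements $a \coloneqq x \vee y$ and $b \coloneqq y \vee z$, obtaining
\[
  r(a \vee b) + r(a \wedge b) \ \le\ r(a) + r(b) \ =\ r(x \vee y) + r(y \vee z).
\]
Now $a \vee b = x \vee y \vee z \ge x \vee z$, so isotony gives $r(a \vee b) \ge r(x \vee z)$. And $y \le a$, $y \le b$ imply $y \le a \wedge b$, so isotony gives $r(a \wedge b) \ge r(y)$. Combining these two bounds with the submodular inequality yields the desired estimate. I expect no real obstacle; the only subtle point is recognising that the triangle inequality is equivalent to an inequality symmetric around $y$, which then matches the shape of submodularity once one picks the correct pair $a = x \vee y$, $b = y \vee z$.
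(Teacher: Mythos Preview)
Your proof is correct and follows essentially the same approach as the paper: both arguments hinge on applying submodularity to the pair $x \vee y$ and $y \vee z$ (the paper's $x \vee t$ and $t \vee y$) together with isotony to bound the resulting join and meet terms. Your presentation is in fact slightly more economical, since you first reduce the triangle inequality to the single estimate $r(x \vee z) + r(y) \le r(x \vee y) + r(y \vee z)$ and then apply submodularity once, whereas the paper derives two one-sided inequalities and adds them---a format chosen because it simultaneously yields the variant metric $d'(x,y) = r(x \vee y) - \min(r(x), r(y))$ mentioned in the subsequent remark.
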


\begin{proof}
  The properties of a metric are clear, except for the triangle
  inequality.  Let elements $x, t, y \in L$ be given.  Notice that
  \[ r(x \vee y) - r(x) \le r(x \vee y \vee t) - r(x) = r(x \vee y
  \vee t) - r(x \vee t) + r(x \vee t) - r(x), \] and the submodularity
  implies that \[ r(x \vee t \vee y) - r(x \vee t) \le r(t \vee y) -
  r((x \vee t) \wedge (t \vee y)) \le r(t \vee y) - r(t). \] Hence
  $r(x \vee y) - r(x) \le r(x \vee t) - r(x) + r(t \vee y) - r(t)$,
  and, by exchanging~$x$ and~$y$, we get $r(x \vee y) - r(y) \le r(x
  \vee t) - r(t) + r(t \vee y) - r(y)$.  Adding these two inequalities
  we obtain $d(x, y) \le d(x, t) + d(t, y)$, as desired.
\end{proof}

\begin{rem}
  The same proof reveals that another metric on~$L$ is given by $d'(x,
  y) \coloneqq r(x \vee y) - \min( r(x), r(y) )$, for $x, y \in L$.
\end{rem}

Accordingly, we associate for any matroid~$M$ on its set of
flats~$L(M)$ the metric $d_M(E, F) \coloneqq 2 \rho(E \vee F) -
\rho(E) - \rho(F)$, where $E, F$ are flats.

We mention also the following characterization.  Recall that in an
ordered set an element~$y$ \emph{covers} an element~$x$ if $x < y$ but
there is no element~$t$ such that $x < t < y$, and an \emph{atom} is
an element covering a least element.  The lattice of flats is
\emph{atomistic}, i.e., every element is a join of atoms, as well as
\emph{semimodular}, i.e., if~$E$ and~$F$ both cover $E \wedge F$, then
$E \vee F$ covers both~$E$ and~$F$; consequently, the Jordan-Dedekind
chain condition is satisfied, in fact, every maximal chain between
flats $E < F$ has length $\rho(F) - \rho(E)$.  Conversely, every
finite, atomistic and semimodular lattice can shown to be the lattice
of flats in a matroid (cf.~\cite[Thm.~1.7.5]{Ox11}).


\subsection*{Designs in matroids}

In order to define designs in matroids it is reasonable to restrict
the class of matroids.  A \emph{perfect matroid design} (PMD) is a
matroid~$M$ of some rank~$r$ for which any $i$-flat has the same
cardinality~$f_i$, where $0 \le i \le r$.  In this case, we say
that~$M$ is of \emph{type} $(f_0, \dots, f_r)$.

\begin{exas}
  The following are examples for perfect matroid designs.
  \begin{enumerate}
  \item The free matroid of rank~$n$, i.e., the matroid $(S, \cP(S))$,
    where~$S$ is a set of cardinality~$n$, is a PMD with $f_i = i$,
    for $0 \le i \le n$.
  \item For an $n$-dimensional vector space~$V$ over a finite
    field~$\F_q$, the vector matroid $(V, \cI)$ has rank~$n$ and its
    $i$-flats are the $i$-dimensional subspaces; it is a PMD with $f_i
    = q^i$, for $0 \le i \le n$.
  \end{enumerate}
\end{exas}

A \emph{loop} in a matroid $(S, \cI)$ is an element $x \in S$ such
that $\{ x \} \notin \cI$.  Two nonloops $x, y \in S$ are
\emph{parallel} if $\{ x, y \} \notin \cI$.  A matroid with no loops
and no distinct parallel elements is called \emph{geometric}.  From
any matroid one obtains a geometric matroid by deleting loops and
identifying parallel elements.  If~$M$ is a PMD of type $(f_0, \dots,
f_r)$, then its geometrization is a PMD of type $(f'_0, \dots, f'_r)$,
where $f'_i \coloneqq \smash{\frac{f_i - f_0} {f_1 - f_0}}$; hence,
$f'_0 = 0$ and $f'_1 = 1$.  In particular, the geometrization of a
vector space over~$\F_q$ is the corresponding projective space, in
which $f_i = [i]_q \coloneqq \smash{\frac{q^i - 1} {q - 1}}$.

\begin{defn}\label{defn:design-matroid}
  Let~$M$ be a PMD of rank~$n$.  A $t$-$(n, k, \lambda)$ \emph{design}
  in~$M$ is a collection~$\cB$ of $k$-flats in~$M$ such that each
  $t$-flat in~$M$ is contained in exactly~$\lambda$ members of~$\cB$.
\end{defn}

For the free matroid $M = (S, \cP(S))$ one obtains the classical
designs.

\begin{lem}
  Let~$M$ be a PMD of rank~$n$ and type $(f_0, \dots, f_n)$.  Any
  $t$-$(n, k, \lambda)$ design in~$M$ is also an $s$-$(n, k,
  \lambda_s)$ design for $s < t$, where \[ \lambda_s
  = \lambda \prod_{i=s}^{t-1} \frac{f_n - f_i} {f_k - f_i} . \]
\end{lem}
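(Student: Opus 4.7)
My plan is to induct on $t - s$, reducing to the base case $s = t-1$, which I will handle by a standard double counting argument. The essential auxiliary ingredient is the observation that in any PMD of rank $n$ and type $(f_0, \dots, f_n)$, the number of $t$-flats through a fixed $(t-1)$-flat $F$ equals $(f_n - f_{t-1})/(f_t - f_{t-1})$, independent of $F$, and the analogous count inside any $k$-flat $B \supseteq F$ is $(f_k - f_{t-1})/(f_t - f_{t-1})$.

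To prove this auxiliary count, I would show that every $x \in S \setminus F$ lies in a unique $t$-flat through $F$, namely $\cl(F \cup \{x\})$. Uniqueness holds because any $t$-flat $T \supseteq F$ containing $x$ must contain $\cl(F \cup \{x\})$, whose rank lies strictly between $\rho(F) = t-1$ (since $x \notin F$) and $\rho(T) = t$, and hence equals $t$. Thus the $t$-flats through $F$ partition $S \setminus F$ into classes of size $f_t - f_{t-1}$ by the PMD property, and dividing $|S \setminus F| = f_n - f_{t-1}$ by $f_t - f_{t-1}$ yields the count. The restriction $M|B$ is itself a PMD of type $(f_0, \dots, f_k)$, so exactly the same argument gives the internal count.

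With this in hand, the base case $s = t-1$ follows by counting pairs $(T, B)$ with $F \subseteq T \subseteq B$ in two ways: summing over $T$ first yields $\lambda \cdot (f_n - f_{t-1})/(f_t - f_{t-1})$ by the $t$-design property, while summing over $B$ first yields $\lambda_{t-1} \cdot (f_k - f_{t-1})/(f_t - f_{t-1})$. Equating and simplifying gives $\lambda_{t-1} = \lambda (f_n - f_{t-1})/(f_k - f_{t-1})$. The inductive step then applies this same identity to the $(s+1)$-$(n, k, \lambda_{s+1})$ design produced by the induction hypothesis, multiplying $\lambda_{s+1}$ by the further factor $(f_n - f_s)/(f_k - f_s)$ and extending the product by one term.

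The only subtle point is the uniqueness assertion for $\cl(F \cup \{x\})$; once that is in place, all remaining counts are clean telescopes whose validity rests entirely on the PMD hypothesis that flat cardinality depends only on rank.
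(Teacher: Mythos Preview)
Your proof is correct and follows essentially the same approach as the paper: induction reducing to the case $s = t-1$, handled by double counting, with the key observation that for an $s$-flat $E$ and $x \notin E$ the closure $\cl(E \cup \{x\})$ is a $t$-flat. The only cosmetic difference is that you count pairs $(T,B)$ of a $t$-flat $T$ and a block $B$ with $E \subseteq T \subseteq B$, having first established the auxiliary count of $t$-flats through $E$, whereas the paper counts point--block incidences $(x,B)$ with $x \in B \setminus E$ directly; the common factor $f_t - f_{t-1}$ simply cancels in your version.
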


\begin{proof}
  By induction it suffices to show the statement for $s = t - 1$,
  which follows by adapting the double-counting argument from
  the classical case.  Let~$\cB$ be a $t$-$(n, k, \lambda)$ design,
  let~$E$ be any $s$-flat and consider all blocks $B \in \cB$ with 
  $E \subseteq B$, the number being $\lambda_s$.  Denoting, for $B \in
  \cB$ and $x \in S$, $\chi(x, B) = 1$ if $x \in B$ and $\chi(x, B) =
  0$ otherwise, we have \[ \lambda_s (f_k - f_s)
  = \sum_{B, E \subseteq B} \sum_{x, x \notin E} \chi(x, B) 
  = \sum_{x, x \notin E} \sum_{B, E \subseteq B}\chi(x, B) 
  = (f_n - f_s) \lambda , \] 
  since if $x \notin E \subseteq B$, then $x \in B$ if and only if
  $\cl(E \cup \{ x \}) \subseteq B$, which is a flat of rank $s + 1 =
  t$.  Hence $\lambda_s = \lambda \frac{f_n - f_s} {f_k - f_s}$,
  independent of~$E$, as desired.
\end{proof}


\section{Incidence geometry}\label{sec:geometry}

The projective space and the affine space will provide natural classes
of perfect matroid designs.  We present here briefly the synthetic
approach to these geometries, cf.~\cite{Be95,BR98}.  For simplicity
all sets are assumed to be finite.

\begin{defn}
  An \emph{incidence space} is a triple $\bG = (\cP, \cL, I)$,
  where~$\cP$ and~$\cL$ are sets, the elements of which are referred
  to as \emph{points} and \emph{lines}, respectively, and $I \subseteq
  \cP \times \cL$ is a relation, called \emph{incidence}, satisfying
  the following axioms:
  \begin{enumerate}[(i)]
  \item for each pair of distinct points~$P$ and~$Q$ there is a unique
    line that is incident with~$P$ and~$Q$, denoted by $PQ$;
  \item each line is incident with at least two points.
  \end{enumerate}
\end{defn}

For a point~$P$ and a line~$\ell$ with $(P, \ell) \in I$ we say that
$P$ is incident with~$\ell$, $P$ is on~$\ell$, $\ell$ is incident
with~$P$, or $\ell$ goes through~$P$.  Two lines $\ell_1, \ell_2 \in
\cL$ \emph{meet} if there is a point~$P$ on both~$\ell_1$ and~$\ell_2$.

For a line~$\ell$, denote by $(\ell)$ the set of points incident
with~$\ell$.  A set~$\cU$ of points is called a \emph{linear set} if
$(PQ) \subseteq \cU$ for any distinct points $P, Q$ of~$\cU$.  Such a
set defines an incidence space $\bU = (\cU, \cL', I')$ in a natural
sense, called a \emph{subspace} of~$\bG$.  If the context is clear, we
may identify a linear set and its corresponding subspace.  The linear
sets form a \emph{closure system}, i.e., they are closed under
arbitrary intersection.  Given a subset~$\cX$ of~$\cP$ its \emph{span}
$\cl(\cX)$ is defined as the smallest linear set containing~$\cX$.

\begin{defn}
  A \emph{projective space} is an incidence space $\bP = (\cP, \cL,
  I)$ with the following properties:
  \begin{enumerate}[(i)]
  \item if $A, B, C, D$ are four points such that the lines $AB$ and
    $CD$ meet, then also the lines and $AC$ and $BD$ meet
    (\emph{Veblen-Young axiom});
  \item each line is incident with at least three points.
  \end{enumerate}
\end{defn}

It is easy to see that any subspace of a projective space is again a
projective space.  The family of all subspaces of a projective space
with its natural incidence relation given by containedness is also
called the \emph{projective geometry}.

In any projective space the exchange property of the span holds, i.e.,
for any linear set~$\cU$ and points $P, Q \notin \cU$ one has
\[ Q \in \cl( \cU \cup \{ P \} ) \ \text{ implies } \ P \in \cl( \cU
\cup \{ Q \} ) . \]
Defining a set of points~$\cB$ to be \emph{independent} if $P \notin
\cl( \cB \setminus \{ P \} )$ for all $P \in \cB$, then it follows
from Proposition~\ref{prop:closure} that $M_{\bP} \coloneqq (\cP,
\cI)$ is a matroid, where~$\cI$ is the family of all independent sets.

An independent set~$\cB$ that spans~$\cP$ is called a \emph{basis}.
The cardinality of a basis~$\cB$ is independent of its choice, and one
defines $\dim \bP = | \cB | - 1$ to be the \emph{dimension} of~$\bP$.
Accordingly, each linear subset~$\cU$ is associated with a dimension,
and one easily sees that a single point has dimension zero and a line
dimension one; a two-dimensional subspace is called a \emph{plane},
and a subspace of dimension $\dim \bP - 1$ is a \emph{hyperplane}.  We
note that $\dim \bP = \rho( M_{\bP} ) - 1$, where~$\rho$ is the
matroid rank, and the $k$-flats are precisely the $(k-1)$-dimensional
subspaces. \pagebreak

\begin{defn}
  An \emph{affine space} is an incidence space $\bA = (\cP, \cL, I)$
  such that the following axioms are satisfied:
  \begin{enumerate}[(i)]
  \item there exists a \emph{parallelism} on~$\bA$, i.e., an
    equivalence relation~$\parallel$ on~$\cL$ such that for each
    point~$P$ and each line~$g$ there exists a unique line~$h$
    through~$P$ such that $h \parallel g$, denoted by $P \parallel g$;
  \item if $A, B, C$ are noncollinear points and if $A', B'$ are
    points with $AB \parallel A'B'$, then the lines $A' \parallel AC$
    and $B' \parallel BC$ intersect in a point~$C'$ (\emph{triangle
      axiom}).
  \end{enumerate}
\end{defn}

Again, the span in affine space satisfies the exchange property.  Thus
the family of independent sets of an affine space~$\bA$ form a
matroid~$M_{\bA}$, and all bases have the same cardinality; the
\emph{dimension} of~$\bA$ is $\dim \bA = | \cB | - 1 = \rho( M_{\bA} )
- 1$, where~$\cB$ is a basis of~$\bA$.

There is a close relation between projective and affine spaces; in
fact, every projective space induces an affine space and vice versa.

\begin{prop}
  Let~$\bP$ be a projective space and~$\cH$ a hyperplane in~$\bP$.
  Then $\bA = (\cP', \cL', I')$, where $\cP' \coloneqq \cP \setminus
  \cH$, $\cL' \coloneqq \{ \ell \in \cL \mid \ell \not\subseteq \cH
  \}$ and $I' \coloneqq I \cap (\cP' \times \cL')$, defines an affine
  space.
\end{prop}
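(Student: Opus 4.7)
The plan is to verify axioms (i) and (ii) of an incidence space together with axioms (i) and (ii) of an affine space for $\bA = (\cP', \cL', I')$. For the incidence axioms I would observe that any two distinct points $P, Q \in \cP'$ determine the unique projective line $PQ$ in $\bP$, and $PQ \not\subseteq \cH$ because $P \notin \cH$, so $PQ \in \cL'$. For the second axiom, each $\ell \in \cL'$ carries at least three points in $\bP$ while $\ell \cap \cH$ is a proper subspace of $\ell$ and hence consists of at most a single point; a standard rank/submodularity argument in the lattice of flats of $\bP$ in fact yields that $\ell \cap \cH$ is exactly one point, which I denote $\infty_\ell$, so that at least two points of $\ell$ survive in $\cP'$.

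Next I define the parallelism by $\ell_1 \parallel \ell_2$ iff $\infty_{\ell_1} = \infty_{\ell_2}$, which is manifestly an equivalence relation. For a point $P \in \cP'$ and a line $g \in \cL'$ with $\infty_g = Q$, the line $h \coloneqq PQ$ belongs to $\cL'$, passes through $P$, and satisfies $\infty_h = Q$, so $h \parallel g$; any other line through $P$ parallel to $g$ must contain both $P$ and $Q$ and therefore equals $PQ$, yielding uniqueness and hence affine axiom (i).

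The main work is the triangle axiom. Given noncollinear $A, B, C \in \cP'$ and $A', B' \in \cP'$ with $AB \parallel A'B'$, I set $X \coloneqq \infty_{AB} = \infty_{A'B'}$, $Y \coloneqq \infty_{AC}$ and $Z \coloneqq \infty_{BC}$. The projective plane $\pi \coloneqq \cl(\{A, B, C\})$ is not contained in $\cH$, so $\pi \cap \cH$ is a projective line carrying $X, Y, Z$; noncollinearity of $A, B, C$ prevents any two of $X, Y, Z$ from coinciding, since, e.g., $X = Y$ would force $AB = AC$. I then consider the plane $\pi' \coloneqq \cl(\{A', B', Y\})$, note that $X \in A'B' \subseteq \pi'$ and therefore the entire line $XY$, and in particular the point $Z$, lies in $\pi'$, and conclude that the two candidate lines $A'Y = (A' \parallel AC)$ and $B'Z = (B' \parallel BC)$ are distinct coplanar projective lines, hence meet in a unique point $C'$. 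To rule out $C' \in \cH$, observe that this would force $C' = Y$ and $C' = Z$, contradicting $Y \neq Z$. Thus $C' \in \cP'$, as required.

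The main obstacle is this last axiom: while each individual verification is elementary, one has to identify the correct ambient plane $\pi'$ in which the two candidate parallels become coplanar and then carefully exclude the various degenerate collapses using the noncollinearity of $A, B, C$; the remaining incidence and parallelism checks are essentially bookkeeping.
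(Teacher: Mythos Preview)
The paper states this proposition without proof, treating it as a standard structural fact about projective and affine spaces (with \cite{Be95,BR98} as background references). Your argument is correct and is precisely the classical textbook proof: identify the point at infinity $\infty_\ell = \ell \cap \cH$ of each affine line, define parallelism via coincidence of these points, and verify the triangle axiom by passing to the projective plane $\pi'$ spanned by $A', B', Y$, where the two candidate parallels become distinct coplanar projective lines and therefore meet. Two small remarks: the fact that $\ell \cap \cH$ is a \emph{single} point actually uses the modular equality $r(\ell \vee \cH) + r(\ell \wedge \cH) = r(\ell) + r(\cH)$ of projective space (not merely submodularity), and you implicitly use that $A' \neq B'$ and that $A', B', Y$ are noncollinear so that $\pi'$ really is a plane---both are easy, but worth a word since the degenerate cases are exactly what you are otherwise so careful to exclude.
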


\begin{prop}
  Let~$\bA = (\cP, \cL, I)$ be an affine space.  Let $\cH \coloneqq \{ [
  \ell ] \mid \ell \in \cL \}$, where $[ \ell ]$ denotes the parallel class
  of the line~$\ell$.  For $\ell \in \cL$ let $\ell' \coloneqq \ell \cup \{
  [ \ell ] \}$, and for each plane~$\alpha$ in~$\bA$ let $\ell_{\alpha}
  \coloneqq \{ [ \ell ] \mid \ell \text{ line in } \alpha \}$.  Define
  \[ \cP' \coloneqq \cP \cup \cH \quad \text{ and } \quad
  \cL' \coloneqq \{ \ell' \mid \ell \in \cL \} \cup \{ \ell_{\alpha} \mid
  \alpha \text{ plane in } \bA \}. \] 
  Then $\bP = (\cP', \cL', I')$ (with the obvious incidence) is a
  projective space in which~$\cH$ is a hyperplane.
\end{prop}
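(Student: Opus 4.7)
The plan is to verify, in order, the two incidence-space axioms on $\bP$, the Veblen-Young axiom, the projective requirement that every line carry at least three points, and finally the hyperplane claim for $\cH$.

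For the incidence-space axioms I would produce the joining line of distinct $X, Y \in \cP'$ by three subcases. If $X, Y \in \cP$, the affine line $XY$ gives $(XY)' \in \cL'$, and no line of the form $\ell_\alpha$ contains any affine point, so uniqueness is immediate. If $X = P \in \cP$ and $Y = [\ell] \in \cH$, the affine parallelism axiom supplies a unique line $m \in \cL$ through $P$ with $m \parallel \ell$, and $m'$ is the required join; any other candidate $m_1'$ through $P$ would force $[m_1] = [\ell]$, and hence $m_1 = m$. If $X = [\ell_1]$ and $Y = [\ell_2]$ are distinct parallel classes, I fix any affine point $A$; the parallels $m_1 \parallel \ell_1$ and $m_2 \parallel \ell_2$ through $A$ are distinct and non-parallel, hence span a unique plane $\alpha$, and $\ell_\alpha$ joins $X, Y$. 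Uniqueness here reduces to showing that a plane is determined by any two distinct parallel classes it contains, which follows because any such plane contains, through each of its points, affine lines representing both classes (planes being closed under parallels), and these lines span the plane. Each $\ell'$ has at least two points since $\ell$ does, and each $\ell_\alpha$ has at least two parallel classes because an affine plane is spanned by two non-parallel lines.

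For the Veblen-Young axiom I would do a case analysis on how many of the four points $A, B, C, D$ lie in $\cH$ and on whether the meeting point of $AB$ and $CD$ is affine or at infinity. The recurring tool is a translation dictionary: the projective line through $P \in \cP$ and $[\ell] \in \cH$ is the completion of the affine parallel to $\ell$ through $P$; the projective line through $[\ell_1], [\ell_2] \in \cH$ is the parallel-class trace of the plane spanned by their representatives through any common point; two projective lines meet at $\cH$ iff their affine avatars are parallel, and at an affine point iff their avatars share one. Using this, the all-affine subcases where the meeting point is affine reduce to the triangle axiom in one plane, the parallel-meeting subcases use that two distinct parallel affine lines are coplanar to produce a single plane in which to argue, and the subcases with points in $\cH$ reduce by the dictionary to analogous affine statements once coplanarity is established. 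I expect the main obstacle to be the bookkeeping of these subcases, especially those in which the meeting point lies in $\cH$ and one must first establish coplanarity from parallelism in order to invoke the triangle axiom.

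To complete the projective axioms, each $\ell' \in \cL'$ has at least three points, namely the at-least-two affine points of $\ell$ together with $[\ell]$; and each $\ell_\alpha$ contains at least three points because an affine plane carries at least three distinct parallel classes, for instance the sides of any triangle $A, B, C$ in $\alpha$ yield pairwise distinct classes $[AB], [BC], [CA]$ (otherwise two of them would share a point while being parallel, forcing coincidence and collinearity). To see that $\cH$ is a hyperplane, I would check that it is a linear subspace (the join of any two of its points is some $\ell_\alpha \subseteq \cH$), that it is proper (affine points are excluded), and that $\cl(\cH \cup \{P\}) = \cP'$ for any $P \in \cP \setminus \cH$: given any other $Q \in \cP$, the projective line joining $P$ and $[PQ] \in \cH$ is $(PQ)'$, which passes through $Q$, so $Q \in \cl(\cH \cup \{P\})$.
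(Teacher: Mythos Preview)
The paper does not supply a proof of this proposition; it is quoted as a standard fact from the background references on synthetic geometry (Bennett; Beutelspacher--Rosenbaum). So there is no ``paper's own proof'' to compare against, and your outline follows the textbook route of verifying the axioms directly.

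That said, there is a genuine slip in your uniqueness argument for the join of two points $[\ell_1],[\ell_2]\in\cH$. You write that ``a plane is determined by any two distinct parallel classes it contains,'' but this is false: two parallel affine planes carry exactly the same parallel classes, so the plane~$\alpha$ is \emph{not} determined. What you actually need is the weaker statement that the \emph{line at infinity} is determined, i.e.\ that $\ell_\alpha=\ell_\beta$ whenever $\alpha$ and $\beta$ each contain lines from both $[\ell_1]$ and $[\ell_2]$. This amounts to showing that such $\alpha$ and $\beta$ are parallel planes (hence have the same pencil of directions), which in turn uses the triangle axiom and is not a consequence of ``these lines span the plane.'' Your justification proves only that the two directions span~$\alpha$, which says nothing about~$\beta$.

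Apart from that, the Veblen--Young verification is only a plan, not a proof: the ``translation dictionary'' is correct, but the coplanarity step you flag as the main obstacle really is one, and in several subcases (e.g.\ $A,B,C$ affine, $D\in\cH$, meeting point at infinity) you will need the triangle axiom in a form not immediately covered by your dictionary. The remaining pieces---three points per line and the hyperplane claim via maximality---are fine.
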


The cardinalities of subspaces in finite projective or affine spaces
turn out to follow a regular pattern.  A first result is that every
line in a projective space~$\bP$ is incident with the same number of
points, and one calls $q \coloneqq | (\ell) | - 1 \ge 2$, where~$\ell$
is a line, the \emph{order} of~$\bP$.

\begin{thm}\label{thm:pmd-pro}
  Any finite projective space of dimension~$d$ and order~$q$ has
  exactly $q^d + \dots + 1 = \smash{\frac{q^{d+1} - 1} {q - 1}}$
  points.  Therefore, each $t$-dimensional subspace has $q^t + \dots +
  1$ points.
\end{thm}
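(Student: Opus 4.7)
The plan is induction on $d$. The base case $d = 1$ is immediate, since a $1$-dimensional projective space is a single line, carrying $q+1$ points by the definition of the order $q$. For the inductive step with $d \geq 2$, fix a hyperplane $H$ of $\bP$ (the span of $d$ of the $d+1$ members of any basis) and a point $P \notin H$. As any subspace of a projective space is again a projective space, and every line of $H$ is also a line of $\bP$ (hence carries $q+1$ points), the subspace $H$ is itself a projective space of dimension $d-1$ and order $q$; the induction hypothesis thus gives $|H| = (q^d - 1)/(q-1)$.

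The heart of the proof is a bijection between the points of $H$ and the lines of $\bP$ through $P$, via $Q \mapsto PQ$. Injectivity is clear: if $PQ = PQ'$ for distinct $Q, Q' \in H$, then $P \in QQ' \subseteq H$, contradicting $P \notin H$. Surjectivity amounts to showing that every line $\ell$ through $P$ meets $H$ in exactly one point. An upper bound on the size of $\ell \cap H$ follows directly from the submodular inequality for the matroid $M_{\bP}$: since $\ell \not\subseteq H$ together with $H$ being a hyperplane forces $\ell \vee H = \bP$, we get
\[ \rho(\ell \cap H) \leq \rho(\ell) + \rho(H) - \rho(\ell \vee H) = 2 + d - (d+1) = 1. \]
With the bijection in hand, each of the $|H|$ lines through $P$ contributes exactly $q$ points of $\bP \setminus \{P\}$, partitioning it, so
\[ |\bP| = 1 + q |H| = 1 + q \cdot \frac{q^d - 1}{q-1} = \frac{q^{d+1} - 1}{q-1}, \]
as desired. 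The ``therefore'' clause is then automatic, since any $t$-dimensional subspace is itself a projective space of dimension $t$ and order $q$, to which the first part applies.

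I expect the main obstacle to be the \emph{non}emptiness of $\ell \cap H$: the submodular inequality permits $\rho(\ell \cap H) = 0$, whereas showing that the intersection contains a point really requires the full modular identity of the projective lattice of flats, which does not come for free from the matroid axioms. My plan for this missing lower bound is a subsidiary induction on $d$: for $d = 2$ (a projective plane), any two distinct lines necessarily meet, which is an immediate consequence of the Veblen-Young axiom together with the three-points-per-line assumption; the higher-dimensional case then reduces to the planar one by working inside the plane spanned by $\ell$ and a suitable auxiliary line of $H$.
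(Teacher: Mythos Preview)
The paper does not supply a proof of this theorem; it is stated as a standard background fact (the relevant reference being \cite{BR98}), so there is nothing to compare against on the paper's side.

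Your argument is the classical one and is sound. The counting via the pencil of lines through~$P$ is correct, and you have correctly isolated the only nontrivial ingredient: that every line not contained in a hyperplane meets it. Your plan for this is right in spirit but the phrase ``the plane spanned by~$\ell$ and a suitable auxiliary line of~$H$'' is doing too much work without justification --- you would need to know that this plane meets~$H$ in a line, which is the same flavour of statement you are trying to prove. A cleaner way to close the gap, still using only Veblen--Young, is to first establish the \emph{join lemma}: for any linear set~$\cU$ and any point $P \notin \cU$ one has $\cl(\cU \cup \{P\}) = \{P\} \cup \smbigcup_{Q \in \cU} (PQ)$ (showing the right-hand side is linear is exactly where Veblen--Young enters). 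Applying this with $\cU = H$ gives $\bP = \cl(H \cup \{P\}) = \{P\} \cup \smbigcup_{Q \in H}(PQ)$; taking any second point $R \in \ell$ then forces $R \in (PQ)$ for some $Q \in H$, whence $\ell = PQ$ meets~$H$ in~$Q$. This replaces your subsidiary induction and avoids the circularity.
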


Using the projective closure, one sees that all lines in a finite
affine space~$\bA$ have also the same number of points, and $q
\coloneqq | (\ell) | \ge 2$, where~$\ell$ is a line, is called the
order of~$\bA$.

\begin{thm}\label{thm:pmd-aff}
  Any finite affine space of dimension~$d$ and order~$q$ has
  exactly~$q^d$ points, and each $t$-dimensional subspace has~$q^t$
  points.
\end{thm}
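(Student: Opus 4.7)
The plan is to reduce the affine statement to the projective case (Theorem~\ref{thm:pmd-pro}) via the projective closure construction from the preceding proposition. Given a finite affine space $\bA$ of dimension~$d$ and order~$q$, extend it to a projective space $\bP = (\cP', \cL', I')$ of dimension~$d$ in which $\bA$ is realised as $\bP \setminus \cH$, where $\cH = \{[\ell] \mid \ell \in \cL\}$ is the hyperplane at infinity.

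The first step is to observe that $\bP$ has order~$q$: for any affine line $\ell$ with $q$ points, the corresponding projective line $\ell' = \ell \cup \{[\ell]\}$ has $q+1$ points, so $|(\ell')| - 1 = q$. Applying Theorem~\ref{thm:pmd-pro} to $\bP$ gives $|\cP'| = (q^{d+1}-1)/(q-1)$, and applying it to the hyperplane $\cH$ (a projective space of dimension $d-1$ and order~$q$) gives $|\cH| = (q^d - 1)/(q-1)$. Subtracting yields the desired count
\[ |\cP| = |\cP'| - |\cH| = \frac{q^{d+1} - q^d}{q-1} = q^d . \]

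For the statement about subspaces, I would argue that every $t$-dimensional affine subspace $\cU$ of~$\bA$ extends to a $t$-dimensional projective subspace $\cU'$ of~$\bP$, whose intersection with $\cH$ is precisely the set of parallel classes of lines of~$\cU$ and forms a $(t-1)$-dimensional projective subspace. Once this is in place, the identical subtraction gives $|\cU| = q^t$.

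The main obstacle is dimension bookkeeping in the closure step, namely verifying that $\cU'$ is of projective dimension exactly $t$ and that $\cU' \cap \cH$ has dimension exactly $t-1$. The cleanest route is via matroid bases: a basis $\{P_0, \dots, P_t\}$ of $\cU$ remains independent in $M_{\bP}$, and the parallel classes $\{[P_0 P_1], \dots, [P_0 P_t]\}$ together with $\{P_0\}$ constitute a projective basis of $\cU'$, the first $t$ of which lie in $\cH$ and span $\cU' \cap \cH$. An alternative (more elementary) approach would be induction on~$t$, partitioning a $t$-dimensional affine subspace into its $q$ parallel $(t-1)$-dimensional hyperplanes, but this requires separately verifying that parallelism on subspaces is well-defined and that parallel hyperplanes partition the ambient subspace.
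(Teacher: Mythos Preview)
The paper does not actually supply a proof of this theorem; it is stated as a standard fact from finite geometry, with the textbooks \cite{Be95,BR98} serving as background references. The only indication of method is the sentence just before the theorem, ``Using the projective closure, one sees that all lines in a finite affine space~$\bA$ have also the same number of points,'' which is used to define the order~$q$ but is not developed into a proof of the point count.

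Your approach via the projective closure is precisely the standard one and is consistent with that hint. The subtraction $|\cP'| - |\cH| = [d{+}1]_q - [d]_q = q^d$ is correct and clean. For the subspace count, the issue you identify is indeed the only substantive point: one must verify that the projective closure~$\cU'$ of a $t$-dimensional affine subspace~$\cU$ has projective dimension exactly~$t$ and meets~$\cH$ in a hyperplane of~$\cU'$. Your basis argument is on the right track; to make it airtight you should also check the \emph{spanning} part, i.e., that every point of $\cl_{\bP}(\cU)$ lies in the span of $\{P_0, [P_0P_1], \dots, [P_0P_t]\}$, which comes down to showing that the directions $[P_0 Q]$ for $Q \in \cU$ already lie in the span of the $[P_0 P_i]$ inside~$\cH$. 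The alternative induction on~$t$ via parallel hyperplanes that you mention is equally valid and is how many textbooks present it.

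In short: there is nothing in the paper to compare against beyond the projective-closure hint, and your proposal fills in that hint correctly.
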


As a consequence of Theorem~\ref{thm:pmd-pro} and Theorem%
~\ref{thm:pmd-aff} one sees that both in projective space and in
affine space the matroid of independent sets is a perfect matroid
design, and we can therefore define \emph{projective} and {\em
  affine}~designs using Definition~\ref{defn:design-matroid}.


\subsection*{Coordinization}

Linear algebra provides a wealth of examples for projective spaces.
Given a vector space~$V$, let $\bP(V) \coloneqq (\cP, \cL, I)$ be the
incidence structure, where~$\cP$ is the set of one-dimensional
subspaces of~$V$, $\cL$ is the set of two-dimensional subspaces and
the incidence~$I$ is given by containedness.

\begin{prop}
  If~$V$ is an $n$-dimensional vector space over the finite
  field~$\F_q$, then~$\bP(V)$ is a projective space of dimension $d
  \coloneqq n - 1$ and order~$q$, denoted by $\PG(d, q)$.  Its
  subspaces~$\bU$ of dimension $t \coloneqq k - 1$ correspond to
  vector subspaces $W \le V$ of dimension~$k$.
\end{prop}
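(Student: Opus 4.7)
The plan is to verify the axioms in stages and then read off the parameters from elementary linear algebra. First, $\bP(V)$ is an incidence space: two distinct points $\langle u \rangle \neq \langle v \rangle$ have linearly independent representatives, so they span a unique $2$-dimensional subspace $\langle u, v \rangle$, giving the unique line through them; and any line, being a $2$-dimensional $\F_q$-space, carries $(q^2 - 1)/(q - 1) = q + 1 \geq 3$ one-dimensional subspaces. Thus axiom (ii) of an incidence space and axiom (ii) of a projective space are both satisfied, and the order is $q = |(\ell)| - 1$.

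Next I would verify the Veblen–Young axiom. Assume four distinct points $A, B, C, D$ with lines $AB$ and $CD$ meeting in a point~$P$. Interpreting each as a $1$-dimensional subspace of~$V$, the hypothesis gives $(A + B) \cap (C + D) \supseteq P$, so the dimension formula yields $\dim(A + B + C + D) \leq 2 + 2 - 1 = 3$. The lines $AC$ and $BD$ correspond to the $2$-dimensional subspaces $A + C$ and $B + D$, both contained in this at-most-$3$-dimensional ambient space, whence $\dim((A + C) \cap (B + D)) \geq 2 + 2 - 3 = 1$; the resulting $1$-dimensional subspace is the point of intersection.

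For the dimension and the subspace correspondence, the key step is to describe the synthetic span in~$\bP(V)$. I would show by induction on~$k$ that $\cl(\{\langle v_1\rangle, \dots, \langle v_k\rangle\})$ is precisely the set of $1$-dimensional subspaces contained in $\langle v_1, \dots, v_k\rangle$: the base case $k = 2$ is the definition of a line, and the inductive step uses that any line joining a new point to a point already in the closure is again contained in the span. From this it follows that a set of points is independent in the synthetic sense exactly when chosen representative vectors are linearly independent, and more generally the linear sets of~$\cP$ correspond bijectively to vector subspaces $W \leq V$ via $\cU \mapsto \bigcup_{P \in \cU} P$. In particular every synthetic basis is the image of a vector-space basis, so $\dim \bP(V) = n - 1 = d$, and a $t$-dimensional subspace corresponds to a vector subspace of dimension $t + 1 = k$.

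The main conceptual step is establishing this synthetic-to-linear dictionary for the span; once in place the rank, order, and subspace counts follow at once. Some care is required in the Veblen–Young argument if, say, $A = C$ or $B = D$, but such cases either force the two lines in question to share a point by construction or are excluded by the standard convention that the four points are distinct. No deeper obstacle arises, because the exchange property of Proposition~\ref{prop:closure} already guarantees that the synthetic closure is matroidal, which is what makes the induction on spans go through smoothly.
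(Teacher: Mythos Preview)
The paper does not supply a proof of this proposition; it is stated as a standard background fact in the coordinization subsection, with the surrounding discussion pointing to Beutelspacher--Rosenbaum for details.  Your outline is a correct and complete verification along the expected lines: the incidence-space axioms and the Veblen--Young axiom both follow from the dimension formula for subspaces of~$V$, and the identification of the synthetic span with the linear span (and hence of linear sets with vector subspaces, and of synthetic bases with vector-space bases) is established by exactly the induction you sketch.  There is therefore nothing in the paper to compare your argument against, but nothing is missing from it either; the degenerate cases you flag in the Veblen--Young step are indeed harmless.
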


Conversely, the first structure theorem for projective space implies
the remarkable fact that any projective space~$\bP$ of dimension $\ge
3$ is of the form $\bP(V)$ for a vector space~$V$ over a (skew) field,
cf.~\cite[Cor.~3.4.3]{BR98}.

For a vector space~$V$ one can also define the affine space $\bA(V)
\coloneqq (\cP, \cL, I)$, where the points~$\cP$ are (identified with)
the vectors in~$V$, the lines~$\cL$ are the cosets of one-dimensional
subspaces of~$V$ and the incidence~$I$ is given by containedness.

\begin{prop}
  For a $d$-dimensional vector space~$V$ over~$\F_q$ the incidence
  structure $\bA(V)$ is an affine space of dimension~$d$ and
  order~$q$, denoted by $\AG(d, q)$.  Its subspaces~$\bU$ of
  dimension~$t$ are given by the cosets of $t$-dimensional subspaces
  of~$V$.
\end{prop}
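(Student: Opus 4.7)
The plan is to verify the affine-space axioms for $\bA(V)$ by direct linear-algebra computations and then derive the dimension, order, and description of subspaces from Theorem~\ref{thm:pmd-aff}.

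\emph{Incidence and parallelism.} For distinct $v, w \in V$, the coset $v + \langle w - v \rangle$ contains both points and is the unique such coset of a $1$-dimensional subspace, since any coset $u + U$ containing $v$ and $w$ forces $w - v \in U$, whence $U = \langle w - v \rangle$ and $u + U = v + U$. Each line has $|{\F_q}| = q \ge 2$ points, so the incidence axioms hold. Declaring two lines parallel iff they are cosets of the same $1$-dimensional subspace yields an equivalence relation, and for any point $P$ and line $g = u + U$ the unique parallel line through $P$ is $P + U$.

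\emph{Triangle axiom.} Given noncollinear $A, B, C$ and points $A', B'$ with $AB \parallel A'B'$, there exists $\lambda \in \F_q^*$ with $B' - A' = \lambda(B - A)$. Setting $C' \coloneqq A' + \lambda(C - A)$, a direct rearrangement of this identity gives $C' - B' = \lambda(C - B)$, so that $C'$ lies on both the line through $A'$ parallel to $AC$, namely $A' + \langle C - A \rangle$, and the line through $B'$ parallel to $BC$, namely $B' + \langle C - B \rangle$, as required.

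\emph{Order, dimension, and subspaces.} The order is $q$ by construction. Since $|\bA(V)| = q^d$, Theorem~\ref{thm:pmd-aff} forces $\dim \bA(V) = d$. For the subspace description, one direction is straightforward: a coset $u_0 + W$ with $\dim_{\F_q} W = t$ is closed under line-taking, because for any $u_0 + w, u_0 + w' \in u_0 + W$ the line through them is $u_0 + w + \langle w' - w \rangle \subseteq u_0 + W$; the bijection $w \mapsto u_0 + w$ exhibits this coset as a copy of $\AG(t, q)$.

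The main obstacle is the converse direction: that every $t$-dimensional affine subspace $\bU$ of $\bA(V)$ arises this way. The approach is to fix any $u_0 \in \bU$ and set $W \coloneqq \bU - u_0$. Then the lines of $\bA(V)$ through $0$ that lie in $W$ are $1$-dimensional $\F_q$-subspaces, giving closure of $W$ under scalar multiplication, while the parallelogram supplied by the triangle axiom (applied to $0, u, v$ and a suitable translate) yields closure under addition. Hence $W$ is a vector subspace, and a cardinality count via Theorem~\ref{thm:pmd-aff} gives $|W| = q^t$, so $\dim_{\F_q} W = t$ and $\bU = u_0 + W$ is a coset as claimed.
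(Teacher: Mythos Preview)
The paper does not actually prove this proposition; it is stated as a standard coordinatization fact and left to the cited references (Bennett; Beutelspacher--Rosenbaum).  So there is no ``paper's proof'' to compare against, and your direct verification is a reasonable way to fill the gap.

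Your check of the incidence axioms, the parallelism, and the triangle axiom is correct (the computation $C'-B'=\lambda(C-B)$ goes through exactly as you wrote), and invoking Theorem~\ref{thm:pmd-aff} to read off the dimension from $|V|=q^{d}$ is legitimate and not circular, since that theorem is proved for abstract affine spaces.  The forward direction of the subspace description is fine.

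The one place that deserves more care is the converse.  Your claim that closure under addition follows from ``the parallelogram supplied by the triangle axiom'' is too quick: the triangle axiom produces the point $C'$ in the ambient space $\bA(V)$, not automatically in $W$.  What you actually need is that the two parallel lines involved already lie in $W$, and for that you must use only that $W$ is closed under lines through two of its points.  For $q\ge 3$ this can be done directly (e.g.\ pick $\alpha\in\F_q\setminus\{0,1\}$, note $\alpha u,v\in W$, and observe that the line through $\alpha u$ and $v$ contains a nonzero scalar multiple of $u+v$), so the argument can be completed.  For $q=2$, however, every two-point subset is already a line, so \emph{every} subset of $V$ is a linear set in the sense of Section~\ref{sec:geometry}; the converse as you have phrased it cannot be salvaged from the line-closure property alone.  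This is a known defect of the purely line-based synthetic framework at order~$2$ rather than an error specific to your proof, but you should either restrict to $q\ge 3$ or handle $q=2$ separately (e.g.\ via the projective closure, where linear sets are controlled by genuine subspaces).
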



\section{Designs in finite geometries}\label{sec:designs}

We are now prepared to state the definition of a projective design and
of an affine design.  Recall that in projective or affine space the
(matroid) rank equals the geometric dimension plus one.

\begin{defn}\label{defn:design-geometry}
  Let $\bG = (\cP, \cL, I)$ be a projective space or an affine space
  of rank~$n$ (dimension~$n-1$).  A $t$-$(n, k, \lambda)$
  \emph{design} in~$\bG$ is a collection~$\cB$ of $k$-flats (i.e.,
  $(k-1)$-dimensional subspaces) of~$\bG$, called \emph{blocks}, such
  that every $t$-flat (i.e., $(t - 1)$-dimensional subspace) of~$\bG$
  is contained in exactly~$\lambda$ blocks.

  If $\bG = \bP$ is a projective space we also refer to a $t$-$(n, k,
  \lambda)$ \emph{projective design}, and in case $\bG = \bA$ is an
  affine space to an $t$-$(n, k, \lambda)$ \emph{affine design}.  If
  $\lambda = 1$ we speak of a (projective or affine) \emph{Steiner
    system} $S(t, k, n)$.
\end{defn}

Note that this definition is consistent with
Definition~\ref{defn:design-matroid} of designs in matroids.  If the
projective space is induced by a vector space we can translate
Definition~\ref{defn:design-geometry} into a more common form as
follows.  Let~$V$ be an $n$-dimensional vector space over~$\F_q$, so
that $\bP(V) = \PG(n - 1, q)$.  A $t$-$(n, k, \lambda)$ \emph{subspace
  design} of order~$q$ is a collection~$\cB$ of $k$-dimensional
subspaces of~$V$ such that every $t$-dimensional subspace of~$V$ is
contained in exactly~$\lambda$ members of~$\cB$.


\subsection*{Affine designs from projective designs}

A \emph{translation} of a vector space~$V$ is a map $\alpha: V \to V$
of the form $x \mapsto v + x$ for some $v \in V$.  The set of all
translations defines a group~$T$ under composition, which is
isomorphic to~$V$ as an abelian group.

A nonempty subset $W \subseteq V$ is an affine space if and only if $W
= \alpha U$ for some subspace~$U$ and a translation~$\alpha$, i.e., if
$W = v + U$ for some $v \in V$.  In this case, $\dim U$ is the
dimension of the affine space~$W$.  Note that an affine space is a
vector subspace if and only if it contains~$0$; in fact, if $0 \in
\alpha U$, then $\alpha U = U$.

\begin{thm}\label{thm:pro-aff}
  Suppose that~$\cB$ is a $t$-$(n, k, \lambda)$ subspace design
  in~$V$, then $T \cB \coloneqq \{ \alpha U \mid U \in \cB, \alpha \in
  T \}$ is an $(t+1)$-$(n+1, k+1, \lambda)$ affine design in $\bA(V)$.
  Conversely, if~$\cD$ is an $(t+1)$-$(n+1, k+1, \lambda)$ affine
  design, then $\cD_0 \coloneqq \{ W \in \cD \mid 0 \in W \}$ is a
  $t$-$(n, k, \lambda)$ subspace design.
\end{thm}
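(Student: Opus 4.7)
My plan is to argue both directions directly, by exploiting the structural fact that in $\bA(V)$ every $k$-dimensional affine subspace is a coset $v + U$ of a unique $k$-dimensional linear subspace $U \le V$; thus for two cosets $v + U$ and $w + U'$ to be equal one must have $U = U'$ and $v - w \in U$. In particular, blocks of $T\cB$ are unambiguously labelled by their direction $U \in \cB$, and $T\cB$ is exactly the set of all cosets $v + U$ with $U \in \cB$ and $v \in V$.

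For the forward direction, I would fix an arbitrary $(t+1)$-flat $W$ of $\bA(V)$ and write $W = w + U_0$ for some $t$-dimensional linear subspace $U_0 \le V$. The key observation is that $w + U_0 \subseteq v + U$ is equivalent to $U_0 \subseteq U$ together with $v + U = w + U$, so for each $U \in \cB$ with $U_0 \subseteq U$ there is exactly one coset of $U$ in $T\cB$ containing $W$, namely $w + U$. Since $\cB$ is a $t$-$(n, k, \lambda)$ subspace design and $U_0$ is a $t$-dimensional subspace of $V$, the number of such $U$ is exactly $\lambda$, which therefore equals the number of blocks of $T\cB$ through $W$.

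For the converse, every element of $\cD_0$ is a $k$-dimensional affine subspace of $V$ containing $0$, hence a $k$-dimensional linear subspace, so $\cD_0$ is a legitimate candidate collection of blocks. Given any $t$-dimensional linear subspace $U_0 \le V$, it is a $(t+1)$-flat of $\bA(V)$ passing through $0$; any block $W \in \cD$ with $U_0 \subseteq W$ then contains $0$ and thus lies in $\cD_0$. The number of such $W$ is precisely $\lambda$ by the design property of $\cD$, which gives the claim. The only real obstacle is a bookkeeping one, namely avoiding overcounting in $T\cB$; this is dispatched once at the outset by the coset uniqueness observation, after which both implications reduce to elementary coset computations.
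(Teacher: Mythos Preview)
Your proposal is correct and follows essentially the same argument as the paper: for the converse you both observe that blocks of $\cD$ through a linear $t$-space must contain~$0$ and hence lie in~$\cD_0$, and for the forward direction the paper's explicit bijection $U \mapsto \alpha U$ between $\{U \in \cB \mid X \subseteq U\}$ and the blocks of $T\cB$ through $\alpha X$ is exactly your coset observation that $w + U_0 \subseteq v + U$ forces $U_0 \subseteq U$ and $v + U = w + U$. The only difference is notational (translations~$\alpha$ versus cosets $v + U$).
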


\begin{proof}
  We prove the latter statement first.  All members $W \in \cD_0$ are
  $k$-dimensional subspaces.  If $X \le V$ is any $t$-dimensional
  subspace, then~$X$ is also a $t$-dimensional affine space.  As $0
  \in X$, clearly $\{ W \in \cD \mid X \subseteq W \} = \{ W \in
  \cD_0 \mid X \subseteq W \}$ and by assumption the cardinality of
  this set is~$\lambda$.  This shows that~$\cD_0$ is a $t$-$(n, k,
  \lambda)$ subspace design.
  
  Now consider the other statement.  All members $W \in T \cB$ are
  certainly $k$-dimensional affine spaces.  Let $Y = \alpha X$ be any
  $t$-dimensional affine space, where $X \le V$ is a $t$-dimensional
  subspace and $\alpha \in T$.  We claim that there is a bijection $\{
  U \in \cB \mid X \subseteq U \} \to \{ W \in T \cB \mid \alpha X
  \subseteq W \}$ given by $U \mapsto \alpha U$.  Clearly, $X
  \subseteq U$ implies $\alpha X \subseteq \alpha U$, thus the map is
  defined and obviously injective.  On the other hand, if $W = \beta U
  \in T \cB$ satisfies $\alpha X \subseteq \beta U$, then $0 \in X
  \subseteq \alpha^{-1} \beta U$ and hence $\alpha^{-1} \beta U = U$;
  thus $W = \beta U = \alpha U$ and $X \subseteq U$, which shows
  surjectivity.  Therefore, the sets have the same cardinality%
  ~$\lambda$, which shows that~$T \cB$ is an affine design.
\end{proof}

\begin{prop}\label{prop:q-cl}
  For any $2$-$(n, k, \lambda)$ subspace design there is a
  $2$-$([n]_q, [k]_q, \lambda)$ classical design, where $[d]_q \coloneqq
  \smash{\frac{q^d - 1} {q - 1}}$.  For any $2$-$(n, k, \lambda)$
  affine design there is a $2$-$(q^{n-1}, q^{k-1}, \lambda)$ classical
  design, and for any $3$-$(n, k, \lambda)$ affine design of order $q
  = 2$ there is a $3$-$(2^{n-1}, 2^{k-1}, \lambda)$ classical design.
\end{prop}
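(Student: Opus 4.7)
My plan is to use a single uniform construction across all three parts: the classical points are taken to be the ``atoms'' of the geometry (the one-dimensional subspaces of $V$ in the projective case, the points of $\AG(n-1,q)$ in the affine case), and each geometric block is turned into a classical block by replacing it with its set of atoms. The verification in each case rests on a single observation: a subspace (linear or affine) contains a given set of atoms if and only if it contains their span, so every classical $t$-subset of atoms picks out a unique $t$-flat of the ambient geometry, and thus inherits the multiplicity $\lambda$ from the design hypothesis.

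For part~(1), I would take a $2$-$(n,k,\lambda)$ subspace design $\cB$ in $V \cong \F_q^n$, use the $[n]_q$ one-dim subspaces of $V$ as classical points, and associate to each $U \in \cB$ the set of its $[k]_q$ one-dim subspaces. Two distinct one-dim subspaces $p_1,p_2$ determine the unique $2$-dim subspace $p_1 + p_2$, and since $U$ is linear we have $\{p_1,p_2\} \subseteq U$ iff $p_1 + p_2 \subseteq U$, which holds for exactly $\lambda$ blocks. For part~(2) the construction is identical with the affine setting: take the $q^{n-1}$ points of $\AG(n-1,q)$ as classical points and each block $W \in \cB$ as the set of its $q^{k-1}$ points. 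Two distinct points $P,Q$ span a unique affine line $PQ$, and as $W$ is a linear set $\{P,Q\} \subseteq W$ iff $PQ \subseteq W$, which holds for exactly $\lambda$ blocks by hypothesis.

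For part~(3), I would repeat the construction of~(2) and verify the $3$-subset condition. The decisive fact is that every line of $\AG(n-1,2)$ has exactly two points, so any three distinct points $P,Q,R$ are necessarily non-collinear and thus span a unique affine plane $\pi$ (containing $q^2 = 4$ points). Since $W \in \cB$ is a linear set, $\{P,Q,R\} \subseteq W$ iff $\pi \subseteq W$, and by hypothesis this holds for exactly $\lambda$ blocks. The restriction $q=2$ is essential here: for $q \ge 3$ three collinear points would span only a line and not a $2$-flat, so the $3$-design hypothesis (which only controls planes, i.e., $2$-flats in the affine-matroid sense) could not be used to assign them a uniform multiplicity. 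The only genuine issue to watch for throughout is the bookkeeping between the three dimension/rank conventions in play (classical block sizes $[k]_q$ and $q^{k-1}$, the rank $k$ in the subspace-design statement, and the rank $k$ in the affine-design statement), so that block sizes and strengths line up with the stated parameters; this is cosmetic rather than a real obstacle.
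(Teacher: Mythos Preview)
Your proposal is correct and follows essentially the same approach as the paper: replace each geometric block by its underlying point set and use the fact that a (linear or affine) subspace contains a set of points iff it contains their span, so that $2$-subsets (respectively $3$-subsets when $q=2$) correspond bijectively to $2$-flats (respectively $3$-flats). The paper's proof is virtually identical, including the observation that for $q=2$ any three points are affinely independent and hence span a plane.
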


\begin{proof}
  For the subspace design, let~$V$ be an $n$-dimensional vector space
  and consider the points~$\cP$ of its projective space $\bP(V)$,
  i.e., the set of all its one-dimensional subspaces; thus $|\cP| =
  [n]_q$.  Each block~$U$ of a $2$-$(n, k, \lambda)$ subspace design
  corresponds to a subset $\cU \coloneqq \{ P \in \cP \mid P \subseteq
  U \}$ of cardinality~$[k]_q$, and a two-element set $\{ P, Q \}$
  of~$\cP$ is contained in such a set~$\cU$ if and only if the
  two-dimensional subspace spanned by~$P$ and~$Q$ is contained in a
  block~$U$.  Thus any $2$-set of~$\cP$ is contained in
  exactly~$\lambda$ blocks.

  Now let~$V$ be a vector space of dimension $n - 1$ and consider a
  design in $\bA(V)$ consisting of $(k - 1)$-dimensional affine
  spaces.  Then~$V$ has $q^{n-1}$ elements, each block has $q^{k-1}$
  elements, and a block contains a two-element set~$X$ if and only if
  it contains the affine space (a line) generated by~$X$.  This shows
  the claim for $2$-$(n, k, \lambda)$ affine designs.

  Finally, if the ground field of the vector space~$V$ is~$\F_2$, then
  any three points are affine independent, i.e., they span a
  two-dimensional affine space (a plane).  As three points are in a
  block if and only if their generated plane is contained in the
  block, the last claim follows.
\end{proof}

By combining Theorem~\ref{thm:pro-aff} and Proposition~\ref{prop:q-cl}
we immediately get:

\begin{cor}[{cf.~\cite[Th.~4]{EV11}}]
  For any $2$-$(n, k, \lambda)$ subspace design of order~$2$ there is
  a $3$-$(2^n, 2^k, \lambda)$ classical design.  In particular, for
  any $S(2, 3, n)$ subspace Steiner system one obtains a classical
  $S(3, 8, 2^n)$ Steiner system.
\end{cor}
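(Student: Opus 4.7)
The plan is simply to chain Theorem~\ref{thm:pro-aff} with the third assertion of Proposition~\ref{prop:q-cl}. There is no real obstacle; everything has already been done in the two earlier results.

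Starting from a $2$-$(n, k, \lambda)$ subspace design~$\cB$ of order~$2$, that is, a collection of $k$-dimensional subspaces of an $n$-dimensional $\F_2$-vector space~$V$, I would first invoke Theorem~\ref{thm:pro-aff} (with $t = 2$) to pass to the orbit family $T\cB$ under translations, obtaining a $3$-$(n+1, k+1, \lambda)$ affine design in $\bA(V)$. Since the ground field is still~$\F_2$, this affine design has order~$2$.

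Next I would apply the third statement of Proposition~\ref{prop:q-cl} to this affine $3$-design. With affine parameters $n' = n+1$ and $k' = k+1$ at order $q = 2$, the proposition outputs a $3$-$(2^{n'-1}, 2^{k'-1}, \lambda) = 3$-$(2^n, 2^k, \lambda)$ classical design, which is the first conclusion.

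Finally, for the Steiner system statement, I would specialize to $k = 3$ and $\lambda = 1$: an $S(2, 3, n)$ subspace Steiner system is a $2$-$(n, 3, 1)$ subspace design of order~$2$, so the general construction produces a $3$-$(2^n, 2^3, 1) = 3$-$(2^n, 8, 1)$ classical design, i.e.\ an $S(3, 8, 2^n)$.
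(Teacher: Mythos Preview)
Your proposal is correct and matches the paper's approach exactly: the paper states that the corollary follows immediately by combining Theorem~\ref{thm:pro-aff} and Proposition~\ref{prop:q-cl}, and that is precisely what you do, with the parameter bookkeeping carried out correctly.
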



\subsection*{Existence of affine Steiner systems}

Recall that a \emph{spread} in a projective geometry~$\bP$ of rank~$n$
(dimension~$n-1$) is a projective Steiner system $S(1, k, n)$, i.e., a
partition of the point set of~$\bP$ into subspaces of rank~$k$
(dimension~$k-1$).  It is well-known that for any order~$q$ a spread
exists if and only if $k \mid n$ (cf.~\cite[p29]{De68}).  Therefore,
Theorem~\ref{thm:pro-aff} readily implies the following result.

\begin{prop}\label{prop:steiner}
  For any $k, \ell$ there is an affine Steiner system $S(2, k+1, k
  \ell + 1)$.
\end{prop}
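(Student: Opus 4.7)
The plan is to deduce this directly from the correspondence in Theorem~\ref{thm:pro-aff} between subspace designs and affine designs, specialising to the Steiner-system case with $t = 1$. Spelling out the parameters, an affine Steiner system $S(2, k+1, k\ell+1)$ is a $2$-$(k\ell+1, k+1, 1)$ affine design, and Theorem~\ref{thm:pro-aff} (with $t = 1$, $n = k\ell$, $\lambda = 1$) says that such an affine design arises from a $1$-$(k\ell, k, 1)$ subspace design in a $k\ell$-dimensional vector space $V$ over~$\F_q$ by taking all translates of its blocks. So it suffices to exhibit a $1$-$(k\ell, k, 1)$ subspace design.

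A $1$-$(k\ell, k, 1)$ subspace design is precisely a partition of the point set of $\bP(V) = \PG(k\ell - 1, q)$ into $(k-1)$-dimensional projective subspaces, i.e., a spread $S(1, k, k\ell)$ in the sense just recalled before the proposition statement. The existence criterion quoted from~\cite{De68} says that such a spread exists if and only if $k \mid k\ell$, which of course always holds. Fix any such spread~$\cB$.

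It remains to invoke Theorem~\ref{thm:pro-aff} on~$\cB$: the collection $T\cB = \{\alpha U \mid U \in \cB,\ \alpha \in T\}$ of all translates of the spread blocks is a $2$-$(k\ell + 1, k+1, 1)$ affine design in $\bA(V) = \AG(k\ell, q)$, which is the desired affine Steiner system $S(2, k+1, k\ell+1)$.

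Since the substantive work (existence of spreads, and the passage from subspace designs to affine designs) has already been carried out, there is no real obstacle in the proof — the only thing to check is the parameter match, and in particular that a spread of $V$ does correspond to a $1$-design in the sense of Definition~\ref{defn:design-matroid}/Definition~\ref{defn:design-geometry}: every $1$-flat of the vector matroid (i.e., every $1$-dimensional subspace of $V$) is contained in exactly one block of~$\cB$, which is precisely the partition condition.
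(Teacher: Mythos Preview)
Your proof is correct and follows exactly the paper's own argument: the paper simply notes that a spread $S(1,k,k\ell)$ exists since $k \mid k\ell$, and then invokes Theorem~\ref{thm:pro-aff} to obtain the affine $S(2,k+1,k\ell+1)$. You have merely spelled out the parameter matching in more detail.
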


This shows, of course, for any order~$q$ and for all~$\ell$ the
existence of Steiner triple systems $S(2, 3, 2 \ell + 1)$ in affine
geometry, and in particular the ``affine $q$-analog'' of the Fano
plane $S(2, 3, 7)$.

\begin{exa}
  Consider an affine Steiner system $S(2, 3, 7)$ for $q = 2$.  This is
  a family~$\cB$ of planes in $\bA( \F_2^6 )$ such that each line is
  contained~in exactly one plane in~$\cB$.  As there are $2016$ lines
  in $\bA( \F_2^6 )$ the size of~$\cB$ is $\frac 1 6 \cdot 2016 = 16
  \cdot 21 = 336$.
\end{exa}
    
Note that the affine Steiner system $S(2, 3, 7)$ constructed from a
spread $S(1, 2, 6)$ in $\PG(5, 2)$ cannot be extended to a subspace
code in the Grassmannian $\cG_2(7, 3)$ of minimum distance~$4$ (i.e.,
to a partial projective Steiner system).  Indeed, this affine Steiner
system $S(2, 3, 7)$ features only~$21$ parallel classes, while the
projective closures of parallel planes will intersect in a common line
at infinity.

In this context we mention that there is a different affine Steiner
system $S(2, 3, 7)$, which is invariant under the Singer cycle of
size~$63$ and which has~$273$ parallel classes.  It has been used in
\cite[Lem.~6]{EV11} to construct a covering code in $\cG_2(7, 3)$ of
size~$399$, covering the $2$-dimensional subspaces.

\begin{que}
  Does there exist an affine Steiner system $S(2, 3, 7)$ for $q = 2$,
  which is \emph{skew}, i.e., with no pair of parallel planes?  If
  yes, then a new subspace code in $\cG_2(7, 3)$ of distance~$4$ and
  size~$336$ is found, improving on the largest size~$333$ of such a
  code reported so far~\cite{HKKW16}.  If no, then the long-standing
  open problem on the existence of the projective $q$-analog of the
  Fano plane $S(2, 3, 7)$ (of size~$381$) would be settled, as such a
  structure cannot exist in this case.
\end{que}


\section{Random network coding}\label{sec:coding}

In this final section we discuss possible applications of affine
designs in a random network coding scenario.  It is straightforward to
adapt the concept of random network coding~\cite{H+06} for affine
spaces, by stipulating that the inner nodes in a noncoherent network
forward a random \emph{affine} combination of the incoming symbols,
instead of a linear combination.  That is, if $v_1, \dots, v_s \in V$
are the received vectors on the incoming edges of a node, the output
along any outgoing edge is an affine combination $\sum_{i=1}^s
\lambda_i v_i$, where the coefficients~$\lambda_i$ are randomly chosen
such that $\sum_{i=1}^s \lambda_i = 1$, i.e., $\lambda_s = 1 -
\sum_{i=1}^{s-1} \lambda_i$.  This is detailed and analyzed
in~\cite{GG11}, where it is shown that affine network coding saves
about one symbol when compared to the standard linear network coding
approach.

Regarding error-correction it is customary in coding theory to use a
metric space in order to model the distance between the sent and the
possibly altered received codeword.  We consider thus metrics in
geometries and related concepts below.

Let~$\bG$ be a projective or an affine geometry of rank~$n$, i.e., of
dimension $n - 1$.  Then, as noted after Proposition~\ref{prop:metric},
a metric on its set of flats is given by 
\[ d(E, F) \coloneqq 2 r( E \vee F) - r(E) - r(F) , \] 
where $E, F$ are flats and $r$ denotes the (matroid) rank.  A
\emph{small-intersection code} or a \emph{partial $S(t, k, n)$ Steiner
  system} in~$\bG$ is a collection~$\cC$ of $k$-flats such that $r(E
\cap F) < t$ for all distinct flats $E, F \in \cC$.  Clearly, if an
$S(t, k, n)$ Steiner system exists it is a small-intersection code of
maximal cardinality.

In projective geometry the \emph{modular equality} $r(E \vee F) + r(E
\wedge F) = r(E) + r(F)$ holds, and thus we can write for the
matroid metric \[ d(E, F) = 
r(E \vee F) + r(E \wedge F) = r(E) + r(F) - 2 r(E \wedge F) , \]
which for $\bG = \bP(V)$ equals the subspace distance of the
corresponding subspaces in~$V$.  In this case, a small-intersection
code is just a code~$\cC$ in the Grassmannian $\cG(n, k)$ of minimum
distance $d(\cC) \ge 2 (k - t + 1)$, and is usually simply referred to
as a \emph{subspace code}.

On the other hand, in affine geometry the modular equality does not
hold.  In fact, the formula
\[ d_{\wedge} (E, F) \coloneqq r(E) + r(F) - 2 r(E \wedge F) \]
does not define a metric, as the triangle inequality is not satisfied
in general.  For example, if $E, F$ are parallel planes and~$T$ is a
further plane that intersects both~$E$ and~$F$ in a line, then
$d_{\wedge} (E, F) = 6$, but $d_{\wedge}(E, T) + d_{\wedge}(T, F) = 2
+ 2 = 4$.


\subsection*{Recovery of deletions}

Notwithstanding the above remark, small-intersection codes in affine
geometry (and thus affine Steiner systems) are interesting in a
network coding scenario for the correction of \emph{deletions}, i.e.,
in a situation when during transmission no erroneous vectors are being
injected into the network, but some information may be lost due to
lack of connectivity.

The abstract framework for adverserial error correction
of~\cite[Sec.~III]{SK09} can be applied in the present situation.
Let~$L$ be the lattice of flats in an affine (or a projective)
geometry.  Define the \emph{deletion discrepancy} $\Delta: L \times
L \to \N \cup \{ \infty \}$ by
\[ \Delta(E, F) \coloneqq \begin{cases} r(E) - r(F) & \text{if } F
  \subseteq E ,\\ \infty & \text{otherwise} . \end{cases} \]
The following result is immediate from~\cite[Prop.~1]{SK09}.

\begin{prop}
  A code $\cC \subseteq L$ is $e$-deletions-correcting if
  $e \le \min_{E \ne E' \in \cC} \tau(E, E')$, where $\tau(E, E')
  = \min_{F \in L} \max \{ \Delta(E, F), \Delta(E', F) \} - 1$.
\end{prop}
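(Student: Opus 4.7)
The plan is to unwind the definition of ``$e$-deletions-correcting'' in terms of the discrepancy $\Delta$ and to observe that the stated bound is exactly the condition that precludes ambiguous decoding. First I would make precise what it means for a transmitted codeword $E \in \cC$ to have suffered at most $e$ deletions: the channel output is some flat $F \in L$ with $F \subseteq E$ and $r(E) - r(F) \le e$, i.e., $\Delta(E, F) \le e$ (the case $\Delta(E,F) = \infty$ corresponding to outputs not reachable by any number of deletions). The code $\cC$ is $e$-deletions-correcting precisely when a decoder can, from any such $F$, unambiguously identify $E$.

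Next I would convert this into a pairwise condition on distinct codewords. Unique decodability fails at a received flat $F$ iff there exist two distinct codewords $E, E' \in \cC$ with $\Delta(E, F) \le e$ and $\Delta(E', F) \le e$, i.e.\ with
\[
\max\bigl\{\Delta(E, F),\ \Delta(E', F)\bigr\} \le e .
\]
Hence $\cC$ is $e$-deletions-correcting if and only if, for every pair of distinct $E, E' \in \cC$ and every $F \in L$, one has $\max\{\Delta(E, F),\Delta(E', F)\} > e$. Quantifying over $F$, this is the statement
\[
\min_{F \in L} \max\bigl\{\Delta(E, F),\ \Delta(E', F)\bigr\} \ge e + 1,
\]
which by definition of $\tau$ reads $\tau(E, E') \ge e$. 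Taking the minimum over all distinct pairs in $\cC$ yields the claimed bound $e \le \min_{E \ne E' \in \cC} \tau(E, E')$.

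There is no real obstacle here beyond bookkeeping: the content is entirely that of the abstract adversarial-error framework of \cite{SK09}, specialized to the discrepancy $\Delta$ defined above. The only point worth noting carefully is that the ``$\infty$'' case in $\Delta$ correctly encodes the fact that deletions can only produce subflats, so that flats $F \not\subseteq E$ simply never arise as outputs and are automatically excluded from the minimization (since the $\max$ would be $\infty > e$). Once this is observed, the proposition is immediate from the equivalence above, which is exactly~\cite[Prop.~1]{SK09}.
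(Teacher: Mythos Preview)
Your argument is correct and matches the paper's own treatment: the paper does not give a standalone proof but simply states that the result is immediate from~\cite[Prop.~1]{SK09}, and your proposal makes that reduction explicit by unwinding the definition of $e$-deletions-correcting in terms of~$\Delta$. The only minor remark is that you actually establish the ``if and only if'' version, which is slightly more than the proposition asserts.
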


In particular, if $\cC$ is a small-intersection-code with parameters
$(t, k, n)$, then $\tau(E, E') = k - r(E \cap E') - 1 \ge k - t$ for
any distinct $E, E' \in \cC$.  Hence such a code is able to correct up
to~$e = k - t$ deletions, i.e., when up to $k - t$ independent vectors
less than submitted are obtained by the receiver.


\subsection*{A polynomial-based code construction}

In affine geometry, a construction of good codes with respect to the
metric~$d$ has been proposed based on a lifting of maximum
rank-distance codes~\cite{GG11}.  Here we present some good
small-intersection codes, which follow the construction of subspace
codes based on linearized polynomials~\cite{KK08} and allow for a
slightly wider range of parameters.

Recall that a \emph{linearized polynomial} is a polynomial $f \in
\F_{q^m}[X]$ of the form $f = f_0 X + f_1 X^q + \dots + f_r X^{q^r}$
for some $f_i \in \F_{q^m}$; these are exactly the polynomials~$f$ (of
degree $< q^m$), for which the associated map $\F_{q^m} \to \F_{q^m}$,
$x \mapsto f(x)$, is $\F_q$-linear.  An \emph{affine polynomial} is a
polynomial $g \in \F_{q^m}[X]$ of the form $g = a + f$ with $a \in
\F_{q^m}$ and a linearized polynomial $f \in \F_{q^m}[X]$; these are
the polynomials~$g$ such that their corresponding function is
$\F_q$-affine, i.e., $g( \lambda x + \mu y ) = \lambda g(x) + \mu
g(y)$ for $x, y \in \F_{q^m}$ and $\lambda, \mu \in \F_q$ with
$\lambda + \mu = 1$.

For $t \ge 1$ let $\cL_t \coloneqq \{ a + \sum_{i=0}^{t-2} f_i X^{q^i}
\mid a, f_i \in \F_{q^m} \}$ be the set of affine polynomials up to
degree $q^{t-2}$.  Let $t - 1 \le \ell \le m$ and let~$U$ be an
$\F_q$-affine subspace of~$\F_{q^m}$ of dimension~$\ell$.  Define
\[ \cC \coloneqq \{ \Gamma( g|_U ) \mid g \in \cL_t \}, \]
where $\Gamma( g|_U ) \subseteq U \times \F_{q^m}$ denotes the graph
of $g|_U \colon U \to \F_{q^m}$, $x \mapsto g(x)$.

\begin{prop}\label{prop:code}
  The code~$\cC$ has $q^{mt}$ elements and is a partial $S(t, k, n)$
  Steiner system in $\bA(V)$, where $V \coloneqq U \times \F_{q^m}$
  and $n \coloneqq r(V) = \ell + m + 1$, $k \coloneqq r(X) = \ell + 1$
  and $r(X \cap Y) < t$ for distinct $X, Y \in \cC$.
\end{prop}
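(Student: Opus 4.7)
My plan rests on a single elementary observation: any nonzero element $h \in \cL_t$ has at most $q^{t-2}$ zeros in~$\F_{q^m}$. Writing $h = b + \tilde f$ with $\tilde f = \sum_{i=0}^{t-2} \hat f_i X^{q^i}$ the linearized part, either $\tilde f = 0$ and $h$ is a nonzero constant with no zeros at all, or $\tilde f \ne 0$ and the solution set of $h(x) = 0$ is a coset of the $\F_q$-subspace $\ker \tilde f \le \F_{q^m}$, whose cardinality is bounded by $\deg \tilde f \le q^{t-2}$.

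From this observation I would deduce $|\cC| = q^{mt}$ as follows. Since $\cL_t$ is an $\F_{q^m}$-vector space of dimension~$t$ (free parameters $a, f_0, \dots, f_{t-2}$), one has $|\cL_t| = q^{mt}$. The assignment $g \mapsto \Gamma(g|_U)$ is injective: if $g_1|_U = g_2|_U$, then $h \coloneqq g_1 - g_2 \in \cL_t$ would vanish on~$U$, but $|U| = q^\ell \ge q^{t-1} > q^{t-2}$ forces $h = 0$.

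Next, for each $g \in \cL_t$ the map $g|_U \colon U \to \F_{q^m}$ is $\F_q$-affine, so $\Gamma(g|_U)$ is an $\F_q$-affine subspace of $V = U \times \F_{q^m}$; the projection to~$U$ is a bijection, so $\Gamma(g|_U)$ has $\F_q$-dimension~$\ell$ and therefore matroid rank $k = \ell + 1$.

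Finally, for distinct $X = \Gamma(g_1|_U)$ and $Y = \Gamma(g_2|_U)$, the flat $X \cap Y$ (intersections of flats are flats, since $\cl(X \cap Y) \subseteq \cl(X) \cap \cl(Y) = X \cap Y$) corresponds via projection to $\{x \in U \mid h(x) = 0\}$ with $0 \ne h \coloneqq g_1 - g_2 \in \cL_t$. By the opening observation this zero set is either empty or a coset of $\ker \tilde f$ in $\F_{q^m}$, hence an $\F_q$-affine subspace of dimension at most $t - 2$; intersecting with~$U$ preserves the bound, so $r(X \cap Y) \le t - 1 < t$. The whole argument reduces to the classical degree bound on the number of roots of a linearized polynomial, so no real obstacle is expected; the slightly delicate point is just keeping careful track of which $\F_q$-dimensions correspond to which matroid ranks.
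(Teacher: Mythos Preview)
Your argument is correct and follows essentially the same approach as the paper: both rest on the fact that two distinct polynomials in~$\cL_t$ agree on at most $q^{t-2} < q^{\ell}$ points of~$U$, which simultaneously yields the injectivity of $g \mapsto \Gamma(g|_U)$ and the bound $r(X \cap Y) \le t-1$. You have simply spelled out in more detail what the paper compresses into a single sentence.
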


\begin{proof}
  For different polynomials $g, h \in \cL_t$, due to the maximum
  degree the functions $g|_U$ and $h|_U$ coincide on at most $q^{t-2}
  < q^{\ell}$ elements, which shows that $X = \Gamma(g|_U)$ and $Y =
  \Gamma(h|_U)$ are distinct and $r(X \cap Y) = \dim(X \cap Y) + 1 \le
  t - 1$.  Then the rest is clear.
\end{proof}

\begin{exa}
  Let $q = 2$.  Considering, say, $m = \ell = 3$ and $t = 3$ we get
  from Proposition~\ref{prop:code} a small-intersection code in
  $\AG(6, q)$, which is a partial $S(3, 4, 7)$ Steiner system with
  $q^9 = 512$ elements.  For $t = 2$ we obtain accordingly a partial
  $S(2, 4, 7)$ Steiner system with $q^6 = 64$ elements, and by
  Proposition~\ref{prop:steiner} there is even a full $S(2, 4, 7)$
  Steiner system, which has~$72$ elements.

  Comparing the codes with their siblings in projective geometry, one
  sees from~\cite{HKKW16} that the best subspace codes with these
  parameters have fewer elements, namely $A_2(n, d; k) = A_2(7, 4; 4)
  = A_2(7, 4; 3) \le 381$ and $A_2(7, 6; 4) = A_2(7, 6; 3) = 17$.
\end{exa}


\section*{Acknowledgments}

The author would like to thank Stefan E.\ Schmidt and Anna-Lena
Horlemann-Trautmann for helpful discussions, and gratefully
acknowledges continuous support from COST Action IC1104 ``Random
Network Coding and Designs over $\on{GF}(q)$''.


\end{document}